 \newtheoremstyle{mytheorem}% name % cf. thmtest.tex of AMSLaTeX
 {3pt}%      Space above
 {3pt}%      Space below
 {\slshape}% Body font
 {}%         Indent amount (empty = no indent,
\numberwithin{equation}{section}
\theoremstyle{theorem}
\newtheorem{theorem}{Theorem}[section]
\newtheorem*{theorem*}{Theorem}
\newtheorem{lemma}[theorem]{Lemma}
\providecommand{\customgenericname}{}
\newcommand{\newcustomtheorem}[2]{%
	\newenvironment{#1}[1]
	{%
		\renewcommand\customgenericname{#2}%
		\renewcommand\theinnercustomgeneric{##1}%
		\innercustomgeneric
	}
	{\endinnercustomgeneric}
}
\theoremstyle{definition}
\newtheorem*{example*}{Example}
\newtheorem{conjecture}{Conjecture}[section]
\theoremstyle{remark}
\newtheorem*{remark*}{Remark}
\newtheorem*{remarks*}{Remarks}
\newtheoremstyle{named}{}{}{\itshape}{}{\bfseries}{.}{.5em}{#1\thmnote{ #3}}
\theoremstyle{named}
\newcommand{\Keywords}[1]{\ifthenelse{\isempty{#1}}{}{\smallskip \smallskip \noindent \textbf{Keywords}. #1}}
\newcommand{\MSC}[2][2020]{\ifthenelse{\isempty{#2}}{}{\smallskip \smallskip \noindent \textbf{#1MSC}. #2}}
\newcommand{\abstractnote}[1]{\ifthenelse{\isempty{#1}}{}{\smallskip \smallskip \noindent \textsuperscript{\dag}#1}}
\def\specialsection{\@startsection{section}{1}%
  \z@{\linespacing\@plus\linespacing}{.5\linespacing}%
%  {\normalfont\centering}}% DELETED
  {\normalfont}}% NEW
\def\section{\@startsection{section}{1}%
  \z@{.7\linespacing\@plus\linespacing}{.5\linespacing}%
%  {\normalfont\scshape\centering}}% DELETED
  {\normalfont\scshape}}% NEW
\patchcmd{\@settitle}{\uppercasenonmath\@title}{\Large\boldmath}{}{}
\patchcmd{\@settitle}{\begin{center}}{\begin{flushleft}}{}{}
\patchcmd{\@settitle}{\end{center}}{\end{flushleft}}{}{}
\patchcmd{\@setauthors}{\MakeUppercase}{\normalsize}{}{}
\patchcmd{\@setauthors}{\centering}{\raggedright}{}{}
\patchcmd{\section}{\scshape}{\large\bfseries\boldmath}{}{}
\patchcmd{\subsection}{\bfseries}{\bfseries\boldmath}{}{}
\renewcommand{\@secnumfont}{\bfseries}
\patchcmd{\@startsection}{\@afterindenttrue}{\@afterindentfalse}{}{}
\patchcmd{\abstract}{\leftmargin3pc}{\leftmargin1pc}{}{}
\def\maketitle{\par
  \@topnum\z@ % this prevents figures from falling at the top of page 1
  \@setcopyright
  \thispagestyle{empty}% this sets first page specifications
  \ifx\@empty\shortauthors \let\shortauthors\shorttitle
  \else \andify\shortauthors
  \fi
  \@maketitle@hook
  \begingroup
  \@maketitle
  \toks@\@xp{\shortauthors}\@temptokena\@xp{\shorttitle}%
  \toks4{\def\\{ \ignorespaces}}% defend against questionable usage
  \edef\@tempa{%
    \@nx\markboth{\the\toks4
      \@nx\MakeUppercase{\the\toks@}}{\the\@temptokena}}%
  \@tempa
  \endgroup
  \c@footnote\z@
  \@cleartopmattertags
}
\newcommand{\eee}{\mathrm{e}}
\newcommand{\iii}{\mathrm{i}}
\newcommand{\ddd}{\operatorname{d}}
\title[Asymmetric Rogers--Ramanujan type identities. I]{Asymmetric Rogers--Ramanujan type identities. I.\\The Andrews--Uncu Conjecture}
\author[S. Chern]{Shane Chern}
\address{Department of Mathematics and Statistics, Dalhousie University, Halifax, Nova Scotia, B3H 4R2, Canada}
\email{chenxiaohang92@gmail.com}
\date{}
\begin{document}

\maketitle

\begin{abstract}

In this work, we start an investigation of asymmetric Rogers--Ramanujan type identities. The first object is the following unexpected relation
$$\sum_{n\ge 0} \frac{(-1)^n q^{3\binom{n}{2}+4n}(q;q^3)_n}{(q^9;q^9)_n} = \frac{(q^{4};q^{6})_\infty (q^{12};q^{18})_\infty}{(q^{5};q^{6})_\infty (q^{9};q^{18})_\infty}$$
and its $a$-generalization. We then use this identity as a key ingredient to confirm a recent conjecture of G. E. Andrews and A. K. Uncu.

\Keywords{Rogers--Ramanujan type identity, Andrews--Uncu Conjecture, asymmetry, $a$-generalization.}

\MSC{11P84, 33D60.}
\end{abstract}

\section{Introduction}

The Rogers--Ramanujan identities,
\begin{align}
\sum_{n\ge 0}\frac{q^{n^2}}{(1-q)(1-q^2)\cdots (1-q^n)} &= \prod_{n\ge 0}\frac{1}{(1-q^{5n+1})(1-q^{5n+4})},\label{eq:RR-1}\\
\sum_{n\ge 0}\frac{q^{n^2+n}}{(1-q)(1-q^2)\cdots (1-q^n)} &= \prod_{n\ge 0}\frac{1}{(1-q^{5n+2})(1-q^{5n+3})},\label{eq:RR-2}
\end{align}
first appeared in L. J. Rogers' \textit{Second memoir on the expansion of certain infinite products}, which was completely overlooked for over two decades. They bloomed again after S. Ramanujan's rediscovery (without proof) \cite{Ram1914}, and then Ramanujan and Rogers \cite{RR1919} presented a joint new proof. Independently, I. Schur \cite{Sch1917} also published two fundamentally different proofs. The proofs of the Rogers--Ramanujan identities were later perfected by W. N. Bailey \cite{Bai1947,Bai1948} through the ingenious Bailey's Lemma, which finally led to L. J. Slater's list \cite{Sla1951,Sla1952} of 130 identities of Rogers--Ramanujan type.

As usual, we adopt the $q$-Pochhammer symbol for $n\in\mathbb{N}\cup\{\infty\}$,
\begin{align*}
(A;q)_n&:=\prod_{k=0}^{n-1} (1-A q^k),\\
(A_1,A_2,\ldots,A_m;q)_n&:=(A_1;q)_n(A_2;q)_n\cdots (A_m;q)_n.
\end{align*}
Throughout, we always assume that $q$ is a complex variable such that $|q|<1$. We shall also introduce other complex parameters and they are such that zero denominators are avoided.

In light to \eqref{eq:RR-1} and \eqref{eq:RR-2}, a Rogers--Ramanujan type identity asserts the equality of a $q$-summation and an infinite product. Also, on the summation side, we often require that the summand contains a factor in which the power of $q$ is quadratic in the summation variable(s).

Further, we may distinguish Rogers--Ramanujan type identities according to the shape of the summation and product sides:

For the summation side, we mainly focus on whether it is a \textit{single} or \textit{multiple} series.

For the product side, we say that it is \textit{periodic} if there exists a positive integer $M$ such that the infinite product has the form
\begin{align*}
\prod_{m=1}^M (q^m;q^M)_\infty^{\delta_m}.
\end{align*}
Further, for a periodic Rogers--Ramanujan type identity, we say that it is \textit{symmetric} if the powers $\delta_m$ satisfy $\delta_m=\delta_{M-m}$ for all $1\le m\le M-1$, and \textit{asymmetric} otherwise.

Most known Rogers--Ramanujan type identities are symmetric. In particular, all 130 identities in Slater's list are single-symmetric, and the infinite products in them may be deduced through Jacobi's triple product identity or the quintuple product identity. There are also many multiple-symmetric identities of Rogers--Ramanujan type, among which the most important one is the generalization of \eqref{eq:RR-1} and \eqref{eq:RR-2} due to G. E. Andrews \cite{And1974} and B. Gordon \cite{Gor1961}:
\begin{equation*}
\prod_{\substack{n\ge 1\\n\not\equiv 0,\pm i\, (\operatorname{mod}\, 2k+1)}}\frac{1}{1-q^n}=\sum_{n_1,\ldots,n_{k-1}\ge 0}\frac{q^{N_1^2+N_2^2+\cdots N_{k-1}^2+N_i+N_{i+1}+\cdots +N_{k-1}}}{(q;q)_{n_1}(q;q)_{n_2}\cdots (q;q)_{n_{k-1}}},
\end{equation*}
where $N_j=n_j+n_{j+1}+\cdots +n_{k-1}$.

In contrast, asymmetric Rogers--Ramanujan type identities are relatively rare. The first such instances are the little G\"ollnitz identities discovered by H. G\"ollnitz \cite{Gol1967}. They have the analytic form:
\begin{align*}
\sum_{n\ge 0}\frac{q^{n^2+n}(-q^{-1};q^2)_n}{(q^2;q^2)_n} &= \frac{1}{(q,q^5,q^6;q^8)_\infty},\\
\sum_{n\ge 0}\frac{q^{n^2+n}(-q;q^2)_n}{(q^2;q^2)_n} &= \frac{1}{(q^2,q^3,q^7;q^8)_\infty}.
\end{align*}
In fact, they are special cases of an identity due to V. A. Lebesgue \cite{Leb1840}:
\begin{align}\label{eq:Leb}
\sum_{n\ge 0}\frac{q^{\binom{n}{2}+n}(a;q)_n}{(q;q)_n} = \frac{(aq;q^2)_\infty}{(q;q^2)_\infty}.
\end{align}
Recently, S. Corteel, C. D. Savage and A. V. Sills \cite{CSS2012} also discovered two single-asymmetric Rogers--Ramanujan type identities:
\begin{align*}
\sum_{n\ge 0}\frac{q^{3\binom{n}{2}+n}(q^2;q^6)_n}{(q;q)_{3n}}&=\frac{1}{(q;q^3)_\infty (q^5;q^6)_\infty},\\
\sum_{n\ge 0}\frac{q^{3\binom{n}{2}+2n}(q^4;q^6)_n}{(q;q)_{3n+1}}&=\frac{1}{(q^2;q^3)_\infty (q;q^6)_\infty},
\end{align*}
which follow from a specialization of Heine's $q$-Gau\ss{} summation:
\begin{align}\label{eq:CSS}
\sum_{n\ge 0}\frac{(-1)^n a^n q^{\binom{n}{2}}(a;q)_n}{(a^2;q)_n (q;q)_n} = \frac{(a;q)_\infty}{(a^2;q)_\infty}.
\end{align}
See \cite[p.~62]{CSS2012}. For multiple-asymmetric Rogers--Ramanujan type identities, an important source is some ``Mod $12$'' Conjectures due to S. Kanade and M. C. Russell \cite{KR2015} and M. C. Russell \cite{Rus2016}. Their original conjectures are partition-theoretic and the corresponding analytic identities were later established by K. Kur\c{s}ung\"{o}z \cite{Kur2018}, Kanade and Russell themselves \cite{KR2018}, and S. Chern and Z. Li \cite{CL2020}. Finally, these conjectures were proved by K. Bringmann, C. Jennings-Shaffer and K. Mahlburg \cite{BJM2020} and H. Rosengren \cite{Ros2021}. For example, the analytic form of \cite[Conjecture $I_5$]{KR2015} is (see \cite[(1.15)]{BJM2020})
\begin{align*}
&\sum_{n_1,n_2,n_3\ge 0}\frac{q^{\binom{n_1}{2}+6\binom{n_2}{2}+9\binom{n_3}{2}+2 n_1 n_2+6n_2 n_3+3n_3n_1+n_1+4n_2+7n_3}}{(q;q)_{n_1} (q^2;q^2)_{n_2}  (q^3;q^3)_{n_3}}\\
&\qquad\qquad\qquad\qquad\qquad\qquad\qquad\qquad\qquad\qquad=\frac{1}{ (q;q^3)_\infty (q^3, q^6, q^{11} ; q^{12})_\infty }.
\end{align*}
Recently, G. E. Andrews and A. K. Uncu \cite[Theorem 1.1]{AU2021} proved one more multiple-asymmetric identity of Rogers--Ramanujan type:
\begin{align*}
\sum_{m,n\ge 0}\frac{(-1)^n q^{2\binom{m}{2}+9\binom{n}{2}+3mn+m+6n}}{(q;q)_m (q^3;q^3)_n} = \frac{1}{(q;q^3)_\infty}.
\end{align*}
They also discovered a similar conjectural identity.
\begin{conjecture}[{The Andrews--Uncu Conjecture \cite[Conjecture 1.2]{AU2021}}]
	\begin{align*}
	\sum_{m,n\ge 0}\frac{(-1)^n q^{2\binom{m}{2}+9\binom{n}{2}+3mn+2m+7n}}{(q;q)_m (q^3;q^3)_n} = \frac{1}{(q^2,q^3;q^6)_\infty}.
	\end{align*}
\end{conjecture}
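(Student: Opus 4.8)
\medskip
\noindent\textbf{Proof strategy.}
I would deduce the Conjecture by the method of $q$-difference equations, using the asymmetric single-sum identity of the abstract, together with its $a$-generalization, to produce and then pin down the closed form on the product side.

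First I would introduce a free parameter $a$ and set
\begin{equation*}
F(a):=\sum_{m,n\ge 0}\frac{(-1)^n\,a^{m+n}\,q^{2\binom{m}{2}+9\binom{n}{2}+3mn+2m+7n}}{(q;q)_m\,(q^3;q^3)_n},
\end{equation*}
a power series in $a$ with constant term $F(0)=1$ whose value at $a=1$ is the left-hand side of the Conjecture; the exact powers of $a$ attached to $m$ and to $n$ are to be tuned so that the product side below carries a matching deformation. By isolating the contribution of one summation variable, shifting the other, and reassembling the double sum, one derives a $q$-difference equation
\begin{equation*}
F(a)=R_1(a;q)\,F(aq^{u})+R_2(a;q)\,F(aq^{v}),
\end{equation*}
for explicit rational functions $R_1,R_2$ and small positive integers $u,v$ --- if need be, a coupled pair of such relations, one obtained from each direction of the double sum. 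Since $F$ is a power series in $a$, this recursion together with $F(0)=1$ determines $F$ uniquely, the coefficient of each power of $a$ being forced by the lower ones.

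On the other side, I would take the $a$-generalization of the single-sum identity, perform the base change $q\mapsto q^3$ along with a suitable substitution for its parameter, and multiply through by elementary infinite products; repeated appeals to the Jacobi triple product and to the splitting of $q$-Pochhammer symbols into residue classes modulo $6$ and modulo $18$ should collapse the result to an explicit $P(a)$ with $P(0)=1$ and $P(1)=1/(q^2,q^3;q^6)_\infty$. It then remains to verify that $P$ satisfies the same $q$-difference equation(s) as $F$; by the uniqueness just noted this forces $F\equiv P$, and setting $a=1$ establishes the Conjecture.

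I expect the crux to be twofold. First, choosing the $a$-deformation so that $F$ and $P$ obey one and the same tractable recurrence is the genuinely creative point: a poorly chosen deformation yields recurrences that do not visibly match. Second, the product manipulation that turns the mixed modulus-$6$-and-$18$ expression coming out of the single-sum identity into the compact $1/(q^2,q^3;q^6)_\infty$ is delicate and easy to get wrong. By comparison, deriving the recurrence for $F$ --- though lengthy, and most safely checked by computer --- is essentially mechanical. A direct reduction of the double series to the single series of the key identity would be tidier if available, but the quadratic dependence of the exponent on $m$ seems to block any closed-form summation over $m$, so I would not expect that naive route to go through.
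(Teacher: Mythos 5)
Your plan is a genuinely different route from the paper's, but as it stands it has a real gap: none of the objects on which the argument rests are actually produced. You never exhibit the $a$-deformation, the rational functions $R_1,R_2$, the shifts $u,v$, or the deformed product $P(a)$; you only assert that they can be ``tuned'' to match. That tuning is not a technicality --- it is the entire difficulty, and there is no evidence offered that a deformation exists for which the double sum and a product built from the $a$-generalization \eqref{eq:RR-a-generalization} satisfy the \emph{same} recurrence. (Note also that \eqref{eq:RR-a-generalization} lives in base $q$ with a modulus-$2$/modulus-$6$ product side, whereas the conjecture's product is modulus $6$ in base $q$; bridging that mismatch is exactly where a hand-waved ``base change and elementary products'' step is likely to break down.) Without these ingredients the uniqueness argument has nothing to act on, so the proposal is a strategy sketch rather than a proof.

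Your closing remark is also mistaken in an instructive way: you dismiss a direct reduction of the double series to the single series on the grounds that ``the quadratic dependence of the exponent on $m$ seems to block any closed-form summation over $m$.'' This is precisely what the paper does, and the quadratic exponent is not an obstacle once the cross term $3mn$ is decoupled by a constant-term (contour integral) representation: the $m$-sum becomes $\sum_{m}(-1)^m q^{\binom{m}{2}+2m}z^m/(q;q)_m=(q^2z;q)_\infty$ by Euler's identity, the $\ell$-sum is a Jacobi triple product, and the $n$-sum contributes $1/(q^4z^3;q^3)_\infty$. The resulting integral is evaluated by the Gasper--Rahman formula \eqref{eq:general-integral} as $F(q)+F(\omega q)+F(\omega^2 q)=3H(F(q))$, i.e.\ a $3$-dissection of (a product multiple of) the single series in \eqref{eq:asy-RR-new}, which is then computed explicitly using the $3$-dissections of $\phi(-q)$ and $\psi(q)$. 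So the ``tidier'' direct route you set aside is in fact the one that works; if you want to salvage your $q$-difference approach you would need to actually construct the deformation and verify the matching recurrences, which remains entirely open in your write-up.
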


There are two objects in this paper. First, we establish the following unexpected single-asymmetric Rogers--Ramanujan type identity.

\begin{theorem}\label{th:asy-RR-new}
	\begin{align}\label{eq:asy-RR-new}
	\sum_{n\ge 0} \frac{(-1)^n q^{3\binom{n}{2}+4n}(q;q^3)_n}{(q^9;q^9)_n} = \frac{(q^{4};q^{6})_\infty (q^{12};q^{18})_\infty}{(q^{5};q^{6})_\infty (q^{9};q^{18})_\infty}.
	\end{align}
\end{theorem}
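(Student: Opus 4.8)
The plan is to embed \eqref{eq:asy-RR-new} into a one-parameter family and to pin down that family by a linear $q$-difference equation. Put
$$F(a):=\sum_{n\ge 0}\frac{(-1)^n q^{3\binom{n}{2}+4n}(q;q^3)_n}{(q^9;q^9)_n}\,a^n,$$
so that $F(1)$ is the left-hand side of \eqref{eq:asy-RR-new} and $F(0)=1$; as a formal power series in $a$ over $\mathbb{Q}((q))$ (equivalently, as an analytic function of $a$ for fixed $|q|<1$) this is well defined. Writing $u_n$ for the coefficient of $a^n$, one has the term ratio $u_n/u_{n-1}=-q^{3n+1}(1-q^{3n-2})/(1-q^{9n})$; clearing denominators, multiplying by $a^n$, summing over $n\ge 1$, and re-indexing the three resulting geometric-type sums produces
$$F(a)-F(aq^9)=aq^5F(aq^6)-aq^4F(aq^3).$$
Extracting the coefficient of $a^N$ from this equation gives back precisely the first-order recurrence $(1-q^{9N})u_N=-q^{3N+1}(1-q^{3N-2})u_{N-1}$, so the equation together with a normalization at $a=0$ has a \emph{unique} power-series solution. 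It therefore suffices to exhibit one solution $G(a)$ with $G(0)=1$ and to compute $G(1)$.

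The second step is to produce the candidate $G(a)$ — an explicit quotient of $a$-shifted infinite products (a theta-type quotient), possibly multiplied by a companion $q$-series needed to carry the asymmetry — arranged so that $G(0)=1$ and so that Jacobi's triple product identity (and, if required, the quintuple product identity) collapses $G(1)$ to $\dfrac{(q^{4};q^{6})_\infty (q^{12};q^{18})_\infty}{(q^{5};q^{6})_\infty (q^{9};q^{18})_\infty}$. One then checks that $G(a)-G(aq^9)=aq^5G(aq^6)-aq^4G(aq^3)$: when $G$ is a theta quotient this becomes, after clearing denominators, a polynomial identity among theta functions in $a$, which I would verify by splitting the infinite products occurring on both sides into subproducts indexed by residue classes (modulo $6$ and modulo $18$) and recombining them through the triple product; when a companion series is present, one matches it against the recurrence directly. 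Uniqueness then forces $F=G$, and setting $a=1$ yields \eqref{eq:asy-RR-new}, along with its $a$-generalization $F(a)=G(a)$.

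The principal obstacle is the construction and verification of $G(a)$: since the target product is asymmetric modulo $18$ it is not a single theta quotient, so the $a$-refinement cannot carry a bare product on its right-hand side, and pinning down the correct companion term is the creative part — once it is in hand, the $q$-difference equation for $G$ unwinds into a manageable, if lengthy, chain of theta identities. Should this refinement prove unwieldy, an alternative is to rewrite the series in base $q^3$ via $(q^9;q^9)_n=\dfrac{(q^3;q^3)_{3n}}{(q^3;q^9)_n(q^6;q^9)_n}$ and to recognize the result as a limiting or specialized case of a classical $q$-hypergeometric transformation (Heine's transformations, the $q$-Gauss sum \eqref{eq:CSS}, the Bailey--Daum summation, or Watson's $q$-analogue of Whipple's theorem), after which the product side drops out of Jacobi's triple product; here the delicate point is to track the base change and the parameter specialization that renders the transformed series summable.
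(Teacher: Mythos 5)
Your setup is sound as far as it goes: the term ratio $u_n/u_{n-1}=-q^{3n+1}(1-q^{3n-2})/(1-q^{9n})$ is computed correctly, the resulting $q$-difference equation $F(a)-F(aq^9)=aq^5F(aq^6)-aq^4F(aq^3)$ is right, and the uniqueness-given-normalization argument is standard and valid. But the proof stops exactly where the theorem lives: you never construct the solution $G(a)$, and you concede that doing so is ``the creative part.'' That is not a deferred technicality — it is the entire content of the identity. Worse, there is good reason to believe that the deformation you chose (tagging the $n$-th term with a bare $a^n$) does not admit any closed form at all: the paper's own $a$-generalization (Theorem \ref{th:RR-a-generalization}) is a structurally different deformation, with $a$ threaded through the Pochhammer symbols as $(a;q)_n(a^{-1}q^2;q^2)_n/(a^2q;q^2)_n$, and it recovers \eqref{eq:asy-RR-new} at $a=q$ after $q\mapsto q^3$ — not at $a=1$ of an $a^n$-tagged series. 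The fact that the working deformation specializes at an interior point of the parameter, and that its summand degenerates nontrivially (the factors $(a^{-1}q^2;q^2)_n$ and $(a^2q;q^2)_n$ cancel only at $a=q$), strongly suggests your $F(a)$ is not a theta quotient times any reasonable companion series, so the plan as stated would stall. Your fallback (base change to $q^3$ plus a classical transformation) is likewise only a direction, not an argument.

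For comparison, the paper's route is: prove Theorem \ref{th:RR-a-generalization} by noting both sides are analytic in $a$, specializing $a=q^{2M}$ so that the factor $(q^{2-2M};q^2)_n$ truncates the sum, running the $q$-Zeilberger algorithm to get a first-order recurrence in the discrete parameter $M$ (not a $q$-difference equation in $a$), solving it against $S_0=1$, and invoking the identity theorem since $\{q^{2M}\}$ accumulates at $0$; then \eqref{eq:asy-RR-new} is the specialization $(a,q)\mapsto(q,q^3)$. The moral difference is that the paper's recurrence is in a parameter along which the sum terminates, so the certificate is finite and machine-checkable, whereas your functional equation in $a$ requires an explicit global solution that you have not produced. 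To salvage your approach you would need to either exhibit $G(a)$ concretely and verify the three-term $q$-contiguity it must satisfy, or switch to a deformation of the summand (such as the paper's) for which the specialized sums terminate.
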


In fact, this identity is the $(a,q)\mapsto (q,q^3)$ case of its $a$-generalization.

\begin{theorem}\label{th:RR-a-generalization}
	\begin{equation}\label{eq:RR-a-generalization}
	\sum_{n\ge 0}\frac{(a;q)_n (a^{-1}q^2;q^2)_n}{(a^2 q;q^2)_n (q^3;q^3)_n}(-1)^n a^n q^{\binom{n}{2}+n} = \frac{(aq;q^2)_\infty (a^3q^3; q^6)_\infty}{(a^2 q; q^2)_\infty (q^3; q^6)_\infty}.
	\end{equation}
\end{theorem}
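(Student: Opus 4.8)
Write $F(a)$ for the left-hand side of \eqref{eq:RR-a-generalization} and $G(a)$ for the right-hand side; fix $q$ with $0<|q|<1$ and regard both as analytic functions of $a$ on the disc $|a|<1$, on which $F$ is given by a locally uniformly convergent series and $G$ is analytic and nonvanishing. The strategy is the classical one for $a$-generalised Rogers--Ramanujan identities: exhibit a first-order $q$-difference equation in $a$ satisfied by $F$, check that $G$ satisfies the same equation, and then use a rigidity argument together with a single evaluation at $a=0$ to conclude $F\equiv G$.

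\medskip

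\noindent\textbf{The functional equation.} Cancelling infinite products on the product side, one checks at once that
\begin{equation}\label{eq:pfeq}
(1-a^2q)(1-a^2q^3)\,G(a)=(1-aq)(1-a^3q^3)\,G(aq^2),
\end{equation}
using $\tfrac{(aq;q^2)_\infty}{(aq^3;q^2)_\infty}=1-aq$, $\tfrac{(a^3q^3;q^6)_\infty}{(a^3q^9;q^6)_\infty}=1-a^3q^3$ and $\tfrac{(a^2q^5;q^2)_\infty}{(a^2q;q^2)_\infty}=\tfrac{1}{(1-a^2q)(1-a^2q^3)}$. The crux is to prove that $F$ satisfies \eqref{eq:pfeq} as well. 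Denoting by $t_n(a)$ the $n$-th summand of $F(a)$, a routine computation gives the ratio
\begin{equation*}
\frac{t_n(aq^2)}{t_n(a)}=-\,\frac{(1-aq^n)(1-aq^{n+1})\,q^{2n}}{(1-aq)\,(a-q^{2n})}\cdot\frac{(1-a^2q)(1-a^2q^3)}{(1-a^2q^{2n+1})(1-a^2q^{2n+3})},
\end{equation*}
so that, after dividing through by $(1-a^2q)(1-a^2q^3)$, equation \eqref{eq:pfeq} for $F$ becomes $\sum_{n\ge0}v_n=0$ with $v_n=r(a,q^n)\,t_n(a)$ for an explicit rational function $r$. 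I would then produce a telescoping certificate, i.e.\ a rational function $s(a,x)$ with
\begin{equation*}
s(a,q^n)\,t_n(a)-s(a,q^{n-1})\,t_{n-1}(a)=v_n\qquad(n\ge0),
\end{equation*}
which, after dividing by $t_n(a)$ and inserting the known ratio $t_{n-1}(a)/t_n(a)$, collapses to a rational-function identity in $a$ and $q^n$ that one verifies directly; summing over $n\ge0$ the boundary contributions vanish ($s(a,q^{n-1})t_{n-1}(a)\to0$, and the $n=0$ boundary term is $0$), which yields \eqref{eq:pfeq} for $F$. Such a certificate $s$ is furnished algorithmically by the $q$-Gosper algorithm, and alternatively the $q$-Zeilberger algorithm produces \eqref{eq:pfeq} for $F$ outright.

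\medskip

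\noindent\textbf{Closing the argument.} Granting \eqref{eq:pfeq} for both $F$ and $G$, the quotient $H(a):=F(a)/G(a)$ is analytic on $|a|<1$ and satisfies $H(a)=H(aq^2)$; iterating, $H(a)=H(aq^{2N})\to H(0)$ as $N\to\infty$, so $H$ is constant and $F(a)=\tfrac{F(0)}{G(0)}G(a)$. It remains only to check $F(0)=G(0)$. On the product side, $G(0)=1/(q^3;q^6)_\infty$. On the series side, using $(a^{-1}q^2;q^2)_n\,a^n=\prod_{k=0}^{n-1}(a-q^{2k+2})\to(-1)^nq^{n^2+n}$ as $a\to0$, one finds
\begin{equation*}
F(0)=\sum_{n\ge0}\frac{q^{3\binom n2+3n}}{(q^3;q^3)_n}=(-q^3;q^3)_\infty=\frac{1}{(q^3;q^6)_\infty}=G(0),
\end{equation*}
by Euler's identity $\sum_{n\ge0}Q^{\binom n2+n}/(Q;Q)_n=(-Q;Q)_\infty$ with $Q=q^3$. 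Hence $F\equiv G$, which is \eqref{eq:RR-a-generalization}; the specialisation $(a,q)\mapsto(q,q^3)$ then recovers Theorem~\ref{th:asy-RR-new}.

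\medskip

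\noindent\textbf{Main obstacle.} The only genuine difficulty is the functional equation \eqref{eq:pfeq} for $F$: because the summand simultaneously involves the bases $q$, $q^2$ and $q^3$, the telescoping certificate and the rational-function verification behind it are computationally heavier than in a single-base Rogers--Ramanujan argument, even though conceptually the step is routine. Everything else---the functional equation for $G$, the rigidity step, and the evaluation at $a=0$---is immediate.
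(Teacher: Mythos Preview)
Your plan is sound and, in fact, lands on the very same functional equation as the paper, just approached from a different direction. The paper specialises $a=q^{2M}$ so that the series terminates, feeds the terminating sum to \texttt{qZeil}, and obtains the first-order recurrence
\[
\frac{S_M}{S_{M-1}}=\frac{(1-q^{4M-3})(1-q^{4M-1})}{(1-q^{2M-1})(1-q^{6M-3})},
\]
which, rewritten with $a=q^{2M}$, is exactly your \eqref{eq:pfeq}. It then matches the product side at all $a=q^{2M}$ and invokes the identity theorem plus analytic continuation. You instead run (in principle) $q$-Zeilberger on the non-terminating sum to get \eqref{eq:pfeq} directly in the continuous variable $a$, and close with the rigidity argument $H(a)=H(aq^{2})\Rightarrow H\equiv H(0)$ and the Euler evaluation at $a=0$. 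Both routes hinge on the same computer-algebra step; the paper's packaging has the practical advantage that \texttt{qZeil} is designed for terminating sums, so there is no need to justify vanishing of boundary terms at infinity, while your packaging is slightly cleaner analytically and makes the role of the single initial value transparent.

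The one genuine gap in your write-up is that you do not actually exhibit the certificate $s$ (or, equivalently, the rational-function identity) establishing \eqref{eq:pfeq} for $F$; you only assert that $q$-Gosper would furnish it. Since the paper's \texttt{qZeil} computation confirms that a first-order relation in $a\mapsto aq^{2}$ does exist, your claim is correct, but to make the proof self-contained you should either record the certificate explicitly or, as the paper does, appeal to a specific verified computer-algebra run. Your check of $F(0)=G(0)$ via Euler's identity is correct.
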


Using \eqref{eq:asy-RR-new} as a key ingredient, the second part of this paper is devoted to a proof of the Andrews--Uncu Conjecture.

\begin{theorem}
	The Andrews--Uncu Conjecture is true. That is,
	\begin{align}
	\sum_{m,n\ge 0}\frac{(-1)^n q^{2\binom{m}{2}+9\binom{n}{2}+3mn+2m+7n}}{(q;q)_m (q^3;q^3)_n} = \frac{1}{(q^2,q^3;q^6)_\infty}.
	\end{align}
\end{theorem}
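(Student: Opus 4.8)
The idea is to collapse the double series to the single series in Theorem~\ref{th:asy-RR-new}, and then to turn the resulting infinite product into $1/(q^{2},q^{3};q^{6})_\infty$ by standard theta manipulations.

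Grouping the part of the exponent that involves $m$, namely $2\binom m2+2m+3mn=m^{2}+m+3mn$, the left-hand side of the conjecture is
\[
\sum_{n\ge0}\frac{(-1)^n q^{9\binom n2+7n}}{(q^{3};q^{3})_n}\;\sum_{m\ge0}\frac{q^{m^{2}+(3n+1)m}}{(q;q)_m}.
\]
The inner sum is a Rogers--Ramanujan-type series with a linear shift $3n+1$, and for $n\ge1$ it is no longer an infinite product, so it cannot simply be evaluated in closed form. Instead I would transform it: using a suitable $q$-series transformation --- a limiting Heine- or Rogers--Fine-type identity, or equivalently a single use of Bailey's lemma --- the inner sum is rewritten as a series over a fresh index, after which the order of summation can be interchanged. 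One then sums out the original variable, and the surviving single series should, after the change of variables forced by the shape $q^{9\binom n2}$ and $(q^{3};q^{3})_n$ (essentially $q\mapsto q^{3}$ inside), coincide with the left-hand side of \eqref{eq:asy-RR-new}.

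At that point Theorem~\ref{th:asy-RR-new} converts the series into the product $\dfrac{(q^{4};q^{6})_\infty(q^{12};q^{18})_\infty}{(q^{5};q^{6})_\infty(q^{9};q^{18})_\infty}$ --- suitably rescaled, and multiplied by whatever product prefactor the earlier transformation contributed. It then remains to verify a pure product identity. I would expand each $q$-Pochhammer product as a bilateral theta series via the Jacobi triple product (and, should a factor of the form $(q^{a};q^{3a})_\infty(q^{2a};q^{3a})_\infty$ appear, via the quintuple product), cancel the common theta factors, and check that what remains equals $1/(q^{2},q^{3};q^{6})_\infty$; the reformulation $1/(q^{2},q^{3};q^{6})_\infty=(q,q^{4},q^{5},q^{6};q^{6})_\infty/(q;q)_\infty$ is a convenient target for this comparison.

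The main obstacle is the second step: finding the exact transformation of the shifted Rogers--Ramanujan series, together with the rescaling of the outer index, that force the collapsed double series onto the left side of \eqref{eq:asy-RR-new}. Everything preceding it is routine juggling of $q$-Pochhammer symbols, and everything after it is a finite product identity that can be certified by matching theta expansions on the two sides.
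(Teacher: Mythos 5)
There is a genuine gap: the step you yourself flag as ``the main obstacle'' is precisely the content of the theorem, and the mechanism you propose for it does not match what actually happens. No Heine-, Rogers--Fine- or Bailey-type transformation of the inner sum $\sum_m q^{m^2+(3n+1)m}/(q;q)_m$ will collapse the double series \emph{directly} onto the left side of \eqref{eq:asy-RR-new}. The paper's route is structurally different: it writes the double sum as a constant-term (contour) integral
$\oint (q^2z,qz,1/z;q)_\infty\,(q;q)_\infty\,(q^4z^3;q^3)_\infty^{-1}\,\frac{\ddd z}{2\pi\iii z}$,
factors $(q^4z^3;q^3)_\infty=(q^{4/3}z,\omega q^{4/3}z,\omega^2q^{4/3}z;q)_\infty$ with $\omega=\eee^{2\pi\iii/3}$, and evaluates the integral by the Gasper--Rahman formula as a sum of three ${}_2\phi_2$'s which is exactly $F(q)+F(\omega q)+F(\omega^2 q)=3H(F(q))$, where $H$ is the operator extracting exponents divisible by $3$. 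So the double sum is not a rescaling of the single series in \eqref{eq:asy-RR-new}; it is the \emph{$3$-dissection} of (a product prefactor times) that series, evaluated at $q^{1/3}$.

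This also breaks the last step of your plan. After applying \eqref{eq:asy-RR-new}, what remains is not ``a pure product identity'': one must compute $H\bigl(q^{-1}(q;q)_\infty(q^2;q^2)_\infty\bigr)$, i.e.\ extract a single residue class modulo $3$ from $q^{-1}\phi(-q)\psi(q)$, and $H$ does not distribute over products. The paper does this with the classical $3$-dissections of $\phi(-q)$ and $\psi(q)$, which is where the asymmetric product $1/(q^2,q^3;q^6)_\infty$ finally emerges. Matching Jacobi/quintuple theta expansions of two fixed products, as you suggest, cannot substitute for this dissection. In short, the two genuinely new ideas needed --- the contour-integral representation that converts the $m$-sum plus a Jacobi triple product into three idem-terms, and the recognition of their sum as a huffing operator --- are absent from the proposal, so it does not constitute a proof.
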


\section{The $a$-generalization}

In this section, we prove the $a$-generalization of \eqref{eq:asy-RR-new} given in Theorem \ref{th:RR-a-generalization}. We first observe that, as functions in $a$, both sides of \eqref{eq:RR-a-generalization} are analytic on the set
\begin{align*}
\mathbb{C}\backslash\{\pm q^{-\frac{1}{2}},\pm q^{-\frac{3}{2}},\pm q^{-\frac{5}{2}},\ldots\}.
\end{align*}
Here we shall notice that for nonnegative integers $n$,
\begin{align*}
(a^{-1}q^2;q^2)_n a^n = \prod_{k=1}^n (a - q^{2k}).
\end{align*}
We start by showing that \eqref{eq:RR-a-generalization} is true for $a$ on the open disk $\mathscr{D}:=\{a\in\mathbb{C}:|a|<|q|^{-\frac{1}{2}}\}$. Now, we find that the set $\{q^{2M}:M\in\mathbb{Z}_{\ge 0}\}$ has an accumulation point $0$ which is on the disc $\mathscr{D}$. Therefore, we know from the identity theorem for holomorphic functions that it suffices to prove \eqref{eq:RR-a-generalization} for $a=q^{2M}$ with $M$ nonnegative integers. Let
\begin{align}
S_M := \sum_{n\ge 0}\frac{(q^{2M};q)_n (q^{2-2M};q^2)_n}{(q^{4M+1};q^2)_n (q^3;q^3)_n}(-1)^n q^{\binom{n}{2}+n+2Mn}.
\end{align}
We remark that these summations are terminating.

Our next task is to derive a recurrence for $S_M$. This can be done automatically by the \textit{Mathematica} package \texttt{qZeil} implemented by P. Paule and A. Riese \cite{PR1997}. The package can be downloaded at the website of the Research Institute for Symbolic Computation (RISC) of Johannes Kepler University:
{\small
	\begin{center}
		\url{https://www3.risc.jku.at/research/combinat/software/ergosum/index.html}
	\end{center}
}

To begin with, we import this package.
\begin{lstlisting}[language=Mathematica]
<< RISC`qZeil`
\end{lstlisting}
The recurrence for $S_M$ can be computed by the following codes:
\begin{lstlisting}[language=Mathematica]
ClearAll[n, M];
summand = (
qPochhammer[q^(2 M), q, n] qPochhammer[q^(2 - 2 M), q^2, n])/(
qPochhammer[q^(4 M + 1), q^2, n] qPochhammer[q^3, q^3, n]) (-1)^
n q^(2 M*n) q^(Binomial[n, 2] + n);
qZeil[summand, {n, 0, Infinity}, M, 1]
\end{lstlisting}
The output gives us
\begin{align*}
\mathrm{SUM}[M] == \frac{q^2(1-q^{-3+4M})(1-q^{-1+4M})\,\mathrm{SUM}[-1+M]}{(1-q^{-1+2M})^2 (q^2+q^{4M}+q^{1+2M})}
\end{align*}

Thus, for $M\ge 1$,
\begin{align*}
\frac{S_M}{S_{M-1}} &= \frac{q^2(1-q^{-3+4M})(1-q^{-1+4M})}{(1-q^{-1+2M})^2 (q^2+q^{4M}+q^{1+2M})}\\
&=\frac{(1-q^{4M-3})(1-q^{4M-1})}{(1-q^{2M-1})(1-q^{6M-3})}.
\end{align*}
Recalling that $S_0=1$, we have
\begin{align*}
S_M&=\frac{(q;q^2)_{2M}}{(q;q^2)_M (q^3;q^6)_M}\\
&=\frac{(q^{2M+1};q^2)_\infty (q^{6M+3};q^6)_\infty}{(q^{4M+1};q^2)_\infty (q^3;q^6)_\infty}.
\end{align*}
This is the right-hand side of \eqref{eq:RR-a-generalization} with $a=q^{2M}$.

Now we have proved \eqref{eq:RR-a-generalization} for $a\in\{q^{2M}:M\in\mathbb{Z}_{\ge 0}\}$, and thus for $a\in \mathscr{D}=\{a\in\mathbb{C}:|a|<|q|^{-\frac{1}{2}}\}$ by the previous argument. Finally, the principle of analytic continuation allows us to extend \eqref{eq:RR-a-generalization} to the set $\mathbb{C}\backslash\{\pm q^{-\frac{1}{2}},\pm q^{-\frac{3}{2}},\pm q^{-\frac{5}{2}},\ldots\}$, and finishes the proof.

\section{The Andrews--Uncu Conjecture}

\subsection{A contour integral representation}

Throughout, all contour integrals are over a positively oriented contour separating $0$ from all poles of the integrand.

We recall a general basic contour integral formula given in \cite[p.~127, (4.10.9)]{GR2004} with $m=0$.

\begin{align}\label{eq:general-integral}
&\oint \frac{(a_1 z,\ldots, a_M z,b_1/z,\ldots b_L/z;q)_\infty}{(c_1 z,\ldots c_N z,d_1/z,\ldots, d_L/z;q)_\infty}\frac{\ddd z}{2\pi \iii z}\notag\\
&\quad=\frac{(b_1 c_1,\ldots, b_L c_1,a_1/c_1, \ldots, a_M/c_1;q)_\infty}{(q,d_1 c_1,\ldots, d_L c_1, c_2/c_1,\ldots, c_N/c_1;q)_\infty}\notag\\
&\quad\quad\times {}_{M+L}\phi_{N+L-1}\left(\begin{matrix}
d_1 c_1, \ldots, d_L c_1, q c_1/a_1, \ldots, q c_1/a_M\\
b_1 c_1, \ldots, b_L c_1, q c_1/c_2, \ldots, q c_1/c_N
\end{matrix};q,u(qc_1)^{N-M}\right)\notag\\
&\quad\quad+\operatorname{idem}(c_1;c_2,\ldots,c_N),
\end{align}
where $u=a_1\cdots a_M/(c_1\cdots c_N)$ and $\operatorname{idem}(c_1;c_2,\ldots,c_N)$ stands for the sum of the $N-1$ expressions obtained from the preceding expression by interchanging $c_1$ with each $c_k$ ($k=2,\ldots,N$).

Notice that in \eqref{eq:general-integral}, we shall assume that $M<N$, or $M=N$ with an additional constraint that $|u|<1$.

Let $H$ be the huffing operator, given by
\begin{align}\label{eq:huffing}
H\left (\sum_{n} a(n)q^n\right ):=\sum_{n} a(3n)q^{3n}.
\end{align}

\begin{theorem}\label{th:H-connection}
	Let $\omega=\eee^{2\pi i/3}$. Then
	\begin{align}\label{eq:H-connection}
	&\oint \frac{(q^6 z, q^3 z, 1/z;q^3)_\infty}{(q^4 z, \omega q^4 z, \omega^2 q^4z; q^3)_\infty}\frac{\ddd z}{2\pi \iii z}\notag\\
	&\qquad = H\left(\frac{(q^4,q^2,q^{-1};q^3)_\infty}{(q^9;q^9)_\infty}\sum_{n\ge 0} \frac{(-1)^n q^{3\binom{n}{2}+4n}(q;q^3)_n}{(q^9;q^9)_n}\right).
	\end{align}
\end{theorem}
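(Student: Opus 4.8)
The plan is to feed the left-hand side of \eqref{eq:H-connection} directly into the general contour integral evaluation \eqref{eq:general-integral}, read off the three terms it produces, and recognize them as the three images under $q\mapsto\omega^j q$ ($j=0,1,2$) of one and the same $q$-series; these then recombine into the huffing operator.

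\emph{Matching parameters.} I would take the base in \eqref{eq:general-integral} to be $q^3$ and set $M=2$, $N=3$, $L=1$, with numerator data $a_1=q^6$, $a_2=q^3$, $b_1=1$ and denominator data $c_1=q^4$, $c_2=\omega q^4$, $c_3=\omega^2 q^4$, $d_1=0$. Inserting the extra $d_1=0$ is the usual device to equalize the number of ``$/z$'' factors in numerator and denominator, and it is harmless since $(0;q^3)_\infty=1$. Here $u=a_1a_2/(c_1c_2c_3)=q^9/q^{12}=q^{-3}$; since $M<N$ no condition on $|u|$ is required, and the resulting ${}_3\phi_3$ series converge for every value of their argument. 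A bonus of this choice of parameters is that the argument $u(q^3c_k)^{N-M}$ of the $k$-th term collapses to $c_k$.

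\emph{The $c_1$-term.} Its prefactor numerator is $(b_1c_1,a_1/c_1,a_2/c_1;q^3)_\infty=(q^4,q^2,q^{-1};q^3)_\infty$, already the numerator on the right of \eqref{eq:H-connection}, and its prefactor denominator is $(q^3,0,c_2/c_1,c_3/c_1;q^3)_\infty=(q^3,\omega,\omega^2;q^3)_\infty$. The cyclotomic factorization $(1-x)(1-\omega x)(1-\omega^2 x)=1-x^3$ gives $(q^3,\omega,\omega^2;q^3)_\infty=(1-\omega)(1-\omega^2)(q^9;q^9)_\infty=3(q^9;q^9)_\infty$. Inside the ${}_3\phi_3$, the upper parameter $d_1c_1=0$ contributes only $(0;q^3)_n=1$; the numerator parameter $q^3c_1/a_2=q^4$ cancels the lower parameter $b_1c_1=q^4$; the three surviving lower factors $(q^3\omega^2,q^3\omega,q^3;q^3)_n$ combine by the same cyclotomic identity into $(q^9;q^9)_n$; the remaining numerator parameter is $q^3c_1/a_1=q$; and $[(-1)^nq^{3\binom{n}{2}}]c_1^n=(-1)^nq^{3\binom{n}{2}+4n}$. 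Hence this term equals $\frac{1}{3}G(q)$, where $G(q):=\frac{(q^4,q^2,q^{-1};q^3)_\infty}{(q^9;q^9)_\infty}\sum_{n\ge 0}\frac{(-1)^nq^{3\binom{n}{2}+4n}(q;q^3)_n}{(q^9;q^9)_n}$ is exactly the argument of $H$ on the right of \eqref{eq:H-connection}.

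\emph{The $\operatorname{idem}$ terms and conclusion.} The two remaining terms come from swapping $c_1$ with $c_2$ and with $c_3$, and the same simplification then produces $\frac{1}{3}G(\omega q)$ and $\frac{1}{3}G(\omega^2 q)$. The cleanest way to see this is to check that the substitution $q\mapsto\omega q$ turns the $c_1$-term into the $c_2$-term term by term---using $\omega^3=1$, $\omega^4=\omega$, $\omega^{-1}=\omega^2$, the invariance of $(q^9;q^9)_\infty$, and $\omega^{3\binom{n}{2}}=1$---and similarly $q\mapsto\omega^2 q$ gives the $c_3$-term. Summing, the left-hand side of \eqref{eq:H-connection} equals $\frac{1}{3}(G(q)+G(\omega q)+G(\omega^2 q))$, which is precisely $H(G)$ since $\frac{1}{3}\sum_{j=0}^{2}\omega^{jn}$ equals $1$ when $3\mid n$ and $0$ otherwise. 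That is \eqref{eq:H-connection}. The step requiring genuine care---hence the main obstacle---is this last bit of bookkeeping: verifying cleanly that the three terms of \eqref{eq:general-integral} are exactly the three images $G(\omega^j q)$, together with the two small tricks (inserting $d_1=0$ and invoking the cyclotomic product identity) that make all the parameters line up.
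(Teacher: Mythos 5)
Your proposal is correct and follows essentially the same route as the paper: the same specialization of \eqref{eq:general-integral} (base $q^3$, $M=2$, $N=3$, $L=1$, with $a_i$, $b_1$, $c_k$, $d_1=0$ chosen identically), the same cyclotomic simplifications yielding $\tfrac13 G(q)$ for the $c_1$-term, and the same identification of the idem terms as $G(\omega q)$ and $G(\omega^2 q)$ followed by the roots-of-unity filter. All the parameter bookkeeping you flag as the delicate step checks out against the paper's computation.
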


\begin{proof}
	In \eqref{eq:general-integral}, we replace $q\mapsto q^3$ and choose $M=2$, $N=3$ and $L=1$ with
	\begin{align*}
	(a_1,a_2)&\mapsto (q^6, q^3),\\
	(b_1)&\mapsto (1),\\
	(c_1,c_2,c_3)&\mapsto (q^4, \omega q^4, \omega^2 q^4),\\
	(d_1)&\mapsto (0).
	\end{align*}
	Then
	\begin{align*}
	&\oint \frac{(q^6 z, q^3 z, 1/z;q^3)_\infty}{(q^4 z, \omega q^4 z, \omega^2 q^4z; q^3)_\infty}\frac{\ddd z}{2\pi \iii z}\\
	&\quad = \frac{(q^4,q^2,q^{-1};q^3)_\infty}{(q^3,\omega,\omega^2;q^3)_\infty} {}_{2}\phi_{2}\left(\begin{matrix}
	0,q\\
	\omega q^3, \omega^2 q^3
	\end{matrix};q^3,q^4\right)\\
	&\quad\quad+ \frac{(\omega q^4,\omega^2 q^2,\omega^2 q^{-1};q^3)_\infty}{(q^3,\omega,\omega^2;q^3)_\infty} {}_{2}\phi_{2}\left(\begin{matrix}
	0,\omega q\\
	\omega q^3, \omega^2 q^3
	\end{matrix};q^3,\omega q^4\right)\\
	&\quad\quad+ \frac{(\omega^2 q^4,\omega q^2,\omega q^{-1};q^3)_\infty}{(q^3,\omega,\omega^2;q^3)_\infty} {}_{2}\phi_{2}\left(\begin{matrix}
	0,\omega^2 q\\
	\omega q^3, \omega^2 q^3
	\end{matrix};q^3,\omega^2 q^4\right).
	\end{align*}
	
	Now, let
	\begin{align}\label{eq:F(q)-def}
	F(q)&:=\frac{(q^4,q^2,q^{-1};q^3)_\infty}{(q^3,\omega,\omega^2;q^3)_\infty} {}_{2}\phi_{2}\left(\begin{matrix}
	0,q\\
	\omega q^3, \omega^2 q^3
	\end{matrix};q^3,q^4\right)\notag\\
	&\,=\frac{1}{3}\frac{(q^4,q^2,q^{-1};q^3)_\infty}{(q^9;q^9)_\infty}\sum_{n\ge 0} \frac{(-1)^n q^{3\binom{n}{2}+4n}(q;q^3)_n}{(q^9;q^9)_n}.
	\end{align}
	We observe that
	\begin{align*}
	\frac{(\omega q^4,\omega^2 q^2,\omega^2 q^{-1};q^3)_\infty}{(q^3,\omega,\omega^2;q^3)_\infty} {}_{2}\phi_{2}\left(\begin{matrix}
	0,\omega q\\
	\omega q^3, \omega^2 q^3
	\end{matrix};q^3,\omega q^4\right) = F(\omega q)
	\end{align*}
	and
	\begin{align*}
	\frac{(\omega^2 q^4,\omega q^2,\omega q^{-1};q^3)_\infty}{(q^3,\omega,\omega^2;q^3)_\infty} {}_{2}\phi_{2}\left(\begin{matrix}
	0,\omega^2 q\\
	\omega q^3, \omega^2 q^3
	\end{matrix};q^3,\omega^2 q^4\right) = F(\omega^2 q).
	\end{align*}
	Thus,
	\begin{align*}
	\oint \frac{(q^6 z, q^3 z, 1/z;q^3)_\infty}{(q^4 z, \omega q^4 z, \omega^2 q^4z; q^3)_\infty}\frac{\ddd z}{2\pi \iii z} &= F(q)+F(\omega q)+F(\omega^2 q)\\
	&= 3H\big(F(q)\big).
	\end{align*}
	This gives our desired result by recalling \eqref{eq:F(q)-def}.
\end{proof}

\subsection{$3$-Dissections}

By \eqref{eq:asy-RR-new}, we have
\begin{align}\label{eq:long-series}
&\frac{(q^4,q^2,q^{-1};q^3)_\infty}{(q^9;q^9)_\infty}\sum_{n\ge 0} \frac{(-1)^n q^{3\binom{n}{2}+4n}(q;q^3)_n}{(q^9;q^9)_n}\notag\\
&\qquad = \frac{(q^4,q^2,q^{-1};q^3)_\infty}{(q^9;q^9)_\infty} \frac{(q^{4};q^{6})_\infty (q^{12};q^{18})_\infty}{(q^{5};q^{6})_\infty (q^{9};q^{18})_\infty}\notag\\
&\qquad = (q^{-1},q^2,q^2,q^4,q^4,q^7;q^6)_\infty \frac{(q^{12};q^{18})_\infty}{(q^9;q^9)_\infty (q^9;q^{18})_\infty}\notag\\
&\qquad = (1-q^{-1})(q^5,q^7;q^6)_\infty\frac{(q^2;q^2)_\infty^2}{(q^6;q^6)_\infty^2} \frac{(q^{12};q^{18})_\infty}{(q^9;q^9)_\infty (q^9;q^{18})_\infty}\notag\\
&\qquad = -q^{-1}(q,q^5;q^6)_\infty (q^2;q^2)_\infty^2 \frac{(q^{12};q^{18})_\infty}{(q^6;q^6)_\infty^2 (q^9;q^9)_\infty (q^9;q^{18})_\infty}\notag\\
&\qquad = -q^{-1}(q;q)_\infty (q^2;q^2)_\infty \frac{(q^{12};q^{18})_\infty}{(q^3;q^3)_\infty (q^6;q^6)_\infty (q^9;q^9)_\infty (q^9;q^{18})_\infty}.
\end{align}

Let us recall Ramanujan's classical theta functions
\begin{align*}
\phi(-q)&:=\frac{(q;q)_\infty^2}{(q^2;q^2)_\infty},\\
\psi(q)&:=\frac{(q^2;q^2)_\infty^2}{(q;q)_\infty}.
\end{align*}
Their $3$-dissections are well-known.

\begin{lemma}
	We have
	\begin{align}
	\phi(-q)&= \frac{(q^9;q^9)_\infty^2}{(q^{18};q^{18})_\infty} - 2q \frac{(q^3;q^3)_\infty (q^{18};q^{18})_\infty^2}{(q^6;q^6)_\infty (q^9;q^9)_\infty},\\
	\psi(q)&=\frac{(q^6;q^6)_\infty (q^{9};q^{9})_\infty^2}{(q^3;q^3)_\infty (q^{18};q^{18})_\infty} + q\frac{(q^{18};q^{18})_\infty^2}{(q^9;q^9)_\infty}.
	\end{align}
\end{lemma}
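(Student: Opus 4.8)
The plan is to derive each $3$-dissection directly from the Jacobi triple product, by sorting the defining theta series according to the residue of the summation index modulo $3$. I will use the series representations $\phi(-q)=\sum_{n\in\mathbb{Z}}(-1)^nq^{n^2}$ and Gauss's identity $\psi(q)=\sum_{n\ge 0}q^{n(n+1)/2}$, together with the Ramanujan theta function $f(a,b):=\sum_{n\in\mathbb{Z}}a^{n(n+1)/2}b^{n(n-1)/2}=(-a;ab)_\infty(-b;ab)_\infty(ab;ab)_\infty$; all of these are standard consequences of the triple product.

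For $\phi(-q)$, observe that $n^2\equiv 0\pmod 3$ if $3\mid n$ and $n^2\equiv 1\pmod 3$ otherwise, so I split $\phi(-q)=\sum_{3\mid n}(-1)^nq^{n^2}+\sum_{3\nmid n}(-1)^nq^{n^2}$. Putting $n=3m$ in the first sum gives $\sum_{m\in\mathbb{Z}}(-1)^mq^{9m^2}=\phi(-q^9)=\frac{(q^9;q^9)_\infty^2}{(q^{18};q^{18})_\infty}$. Putting $n=3m+1$ and $n=3m-1$ in the second sum, using $(-1)^{3m\pm 1}=-(-1)^m$ and then $m\mapsto -m$ in the $n=3m-1$ family, collapses it to $-2q\sum_{m\in\mathbb{Z}}(-1)^mq^{9m^2+6m}$, which the triple product turns into $-2q\,(q^{18};q^{18})_\infty(q^3;q^{18})_\infty(q^{15};q^{18})_\infty$. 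It then remains to verify the elementary product identity $(q^3;q^{18})_\infty(q^{15};q^{18})_\infty=\frac{(q^3;q^3)_\infty(q^{18};q^{18})_\infty}{(q^6;q^6)_\infty(q^9;q^9)_\infty}$, which follows from $\frac{(q^3;q^3)_\infty}{(q^6;q^6)_\infty}=(q^3;q^6)_\infty$, $\frac{(q^{18};q^{18})_\infty}{(q^9;q^9)_\infty}=\frac{1}{(q^9;q^{18})_\infty}$, and $(q^3;q^6)_\infty=(q^3;q^{18})_\infty(q^9;q^{18})_\infty(q^{15};q^{18})_\infty$.

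For $\psi(q)$, observe that $n(n+1)/2\equiv 1\pmod 3$ exactly when $n\equiv 1\pmod 3$, and $\equiv 0$ otherwise, so I split $\psi(q)=\sum_{n\equiv 1}q^{n(n+1)/2}+\sum_{n\not\equiv 1}q^{n(n+1)/2}$ (over $n\ge 0$). The first sum, with $n=3m+1$, equals $q\sum_{m\ge 0}(q^9)^{m(m+1)/2}=q\,\psi(q^9)=q\,\frac{(q^{18};q^{18})_\infty^2}{(q^9;q^9)_\infty}$, the claimed second term. For the second sum I take $n=3m$ and $n=3m+2$ separately; since $3m(3m+1)/2=3g_{-m}$ and $(3m+2)(3m+3)/2=3g_{m+1}$, where $g_n:=n(3n-1)/2$ denotes the $n$-th generalized pentagonal number, the two families recombine into the single bilateral series $\sum_{n\in\mathbb{Z}}q^{3g_n}=f(q^3,q^6)=(-q^3;q^9)_\infty(-q^6;q^9)_\infty(q^9;q^9)_\infty$. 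A short computation using $(-x;y)_\infty=\frac{(x^2;y^2)_\infty}{(x;y)_\infty}$ together with the residue splittings of $(q^3;q^3)_\infty$ and $(q^6;q^6)_\infty$ rewrites this last product as $\frac{(q^6;q^6)_\infty(q^9;q^9)_\infty^2}{(q^3;q^3)_\infty(q^{18};q^{18})_\infty}$, the claimed first term.

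The only step that needs genuine care is the recombination in the $\psi$ case: one must check that, as multisets, $\{3m(3m+1)/2:m\ge 0\}\cup\{(3m+2)(3m+3)/2:m\ge 0\}$ coincides with $\{3g_n:n\in\mathbb{Z}\}$, which reduces to the two index bijections noted above. Everything else is routine bookkeeping with the triple product and elementary $q$-Pochhammer manipulations; indeed both $3$-dissections are classical and could alternatively be quoted from standard references on Ramanujan's theta functions.
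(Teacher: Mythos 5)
Your derivation is correct: both dissections check out, including the index bijections. For $\phi(-q)$ the split $n=3m,\,3m\pm 1$ with $(-1)^{3m\pm1}=-(-1)^m$ and the reflection $m\mapsto-m$ correctly produce $\phi(-q^9)-2q\sum_m(-1)^mq^{9m^2+6m}$, and the product simplification is right. For $\psi(q)$ the key facts $T_{3m+1}=9T_m+1$, $T_{3m}=3g_{-m}$ and $T_{3m+2}=3g_{m+1}$ hold, and the ranges $\{-m:m\ge0\}\cup\{m+1:m\ge0\}$ do partition $\mathbb{Z}$, so the recombination into $f(q^3,q^6)$ is valid; the final Pochhammer computation also checks. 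The paper, however, does not prove the lemma at all: it simply cites equations (14.3.2) and (14.3.3) of Hirschhorn's \emph{The power of $q$} — precisely the option you mention in your closing sentence. So your write-up supplies a self-contained triple-product argument where the paper defers to a standard reference; the trade-off is a page of routine but verifiable bookkeeping in exchange for not having to trust (or look up) the cited formulas. Either route is acceptable here, since the lemma is classical and plays only an auxiliary role in the $3$-dissection computation of Theorem \ref{th:H-calculation}.
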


\begin{proof}
	See \cite[(14.3.2) and (14.3.3)]{Hir2017}.
\end{proof}

\begin{theorem}\label{th:H-calculation}
	We have
	\begin{align}\label{eq:H-calculation}
	H\left(\frac{(q^4,q^2,q^{-1};q^3)_\infty}{(q^9;q^9)_\infty}\sum_{n\ge 0} \frac{(-1)^n q^{3\binom{n}{2}+4n}(q;q^3)_n}{(q^9;q^9)_n}\right) = \frac{1}{(q^3;q^3)_\infty (q^6,q^9;q^{18})_\infty}.
	\end{align}
\end{theorem}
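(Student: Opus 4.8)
The plan is to feed the product evaluation already secured in \eqref{eq:long-series} into the huffing operator. By \eqref{eq:long-series}, the quantity inside $H$ on the left of \eqref{eq:H-calculation} equals
\[
-q^{-1}\,\frac{(q;q)_\infty (q^2;q^2)_\infty (q^{12};q^{18})_\infty}{(q^3;q^3)_\infty (q^6;q^6)_\infty (q^9;q^9)_\infty (q^9;q^{18})_\infty}.
\]
Two structural observations drive the computation. First, $(q;q)_\infty(q^2;q^2)_\infty=\phi(-q)\psi(q)$, which is exactly why the preceding Lemma (the $3$-dissections of $\phi(-q)$ and $\psi(q)$) is the right tool. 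Second, every remaining factor --- the numerator $(q^{12};q^{18})_\infty$ and the entire denominator --- is a power series in $q^3$; since $H$ is linear and annihilates every monomial $q^n$ with $3\nmid n$, it satisfies $H\bigl(f(q)\,g(q^3)\bigr)=H\bigl(f(q)\bigr)\,g(q^3)$. Hence the task reduces to computing $H\bigl(-q^{-1}\phi(-q)\psi(q)\bigr)$ and then reattaching the $q^3$-part.

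Writing the $3$-dissection $\phi(-q)\psi(q)=A_0(q^3)+qA_1(q^3)+q^2A_2(q^3)$, multiplication by $-q^{-1}$ sends these three pieces into the residue classes $2,0,1$ modulo $3$, respectively, so $H\bigl(-q^{-1}\phi(-q)\psi(q)\bigr)=-A_1(q^3)$. To identify $A_1$, I would multiply the two dissections supplied by the Lemma, $\phi(-q)=a(q^3)+q\,b(q^3)$ and $\psi(q)=c(q^3)+q\,d(q^3)$ (each with vanishing $q^2$-component), where
\[
a(q^3)=\frac{(q^9;q^9)_\infty^2}{(q^{18};q^{18})_\infty},\qquad b(q^3)=-2\,\frac{(q^3;q^3)_\infty (q^{18};q^{18})_\infty^2}{(q^6;q^6)_\infty (q^9;q^9)_\infty},
\]
\[
c(q^3)=\frac{(q^6;q^6)_\infty (q^9;q^9)_\infty^2}{(q^3;q^3)_\infty (q^{18};q^{18})_\infty},\qquad d(q^3)=\frac{(q^{18};q^{18})_\infty^2}{(q^9;q^9)_\infty}.
\]
Then $A_1(q^3)=a(q^3)d(q^3)+b(q^3)c(q^3)$, and each product telescopes: $a(q^3)d(q^3)=(q^9;q^9)_\infty(q^{18};q^{18})_\infty$ while $b(q^3)c(q^3)=-2(q^9;q^9)_\infty(q^{18};q^{18})_\infty$, so that $A_1(q^3)=-(q^9;q^9)_\infty(q^{18};q^{18})_\infty$.

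Reassembling the pieces, the left-hand side of \eqref{eq:H-calculation} equals
\[
(q^9;q^9)_\infty(q^{18};q^{18})_\infty\cdot\frac{(q^{12};q^{18})_\infty}{(q^3;q^3)_\infty(q^6;q^6)_\infty(q^9;q^9)_\infty(q^9;q^{18})_\infty},
\]
and the last step is the elementary splitting $(q^6;q^6)_\infty=(q^6,q^{12},q^{18};q^{18})_\infty$, which cancels the factors $(q^{12};q^{18})_\infty$ and $(q^{18};q^{18})_\infty$ and leaves precisely $1/\bigl((q^3;q^3)_\infty(q^6,q^9;q^{18})_\infty\bigr)$, as claimed. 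I expect no serious obstacle: the only delicate points are the residue bookkeeping --- that after dividing by $q$ it is the $q^1$-component of the $3$-dissection of $\phi(-q)\psi(q)$, not the $q^0$- or $q^2$-component, that survives $H$ --- and the verification that $H$ commutes with multiplication by a series in $q^3$; once these are granted, the remaining manipulations are routine cancellations of infinite products.
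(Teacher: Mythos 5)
Your proposal is correct and follows essentially the same route as the paper: substitute the product form from \eqref{eq:long-series}, factor the $q^3$-series out of $H$, recognize $(q;q)_\infty(q^2;q^2)_\infty=\phi(-q)\psi(q)$, extract the $q^1$-component of its $3$-dissection via the Lemma to get $H\bigl(q^{-1}\phi(-q)\psi(q)\bigr)=-(q^9;q^9)_\infty(q^{18};q^{18})_\infty$, and cancel. Your write-up actually makes explicit the cross-term computation $A_1=ad+bc$ that the paper leaves implicit, and all the bookkeeping checks out.
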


\begin{proof}
	We conclude from \eqref{eq:long-series} that
	\begin{align*}
	&H\left(\frac{(q^4,q^2,q^{-1};q^3)_\infty}{(q^9;q^9)_\infty}\sum_{n\ge 0} \frac{(-1)^n q^{3\binom{n}{2}+4n}(q;q^3)_n}{(q^9;q^9)_n}\right)\\
	&\qquad=-\frac{(q^{12};q^{18})_\infty}{(q^3;q^3)_\infty (q^6;q^6)_\infty (q^9;q^9)_\infty (q^9;q^{18})_\infty}H\big(q^{-1}(q;q)_\infty (q^2;q^2)_\infty\big)\\
	&\qquad = -\frac{(q^{12};q^{18})_\infty}{(q^3;q^3)_\infty (q^6;q^6)_\infty (q^9;q^9)_\infty (q^9;q^{18})_\infty}H\big(q^{-1}\phi(-q)\psi(q)\big)\\
	&\qquad= \frac{(q^{12};q^{18})_\infty}{(q^3;q^3)_\infty (q^6;q^6)_\infty (q^9;q^9)_\infty (q^9;q^{18})_\infty} \cdot (q^9;q^9)_\infty (q^{18};q^{18})_\infty.
	\end{align*}
	This simplifies to our desired result.
\end{proof}

\subsection{Proof of the Andrews--Uncu Conjecture}

Now we complete the proof of the Andrews--Uncu Conjecture. Notice that
\begin{align*}
&\sum_{m,n\ge 0}\frac{(-1)^n q^{2\binom{m}{2}+9\binom{n}{2}+3mn+2m+7n}}{(q;q)_m (q^3;q^3)_n}\\
&\qquad=\oint \sum_{m\ge 0}\frac{(-1)^m q^{\binom{m}{2}+2m} z^m}{(q;q)_m}\sum_{n\ge 0} \frac{q^{4n} z^{3n}}{(q^3;q^3)_n} \sum_{\ell=-\infty}^\infty (-1)^\ell q^{\binom{\ell}{2}} z^{-\ell} \frac{\ddd z}{2\pi \iii z}\\
&\qquad= \oint \frac{(q^2 z;q)_\infty (q,qz,1/z;q)_\infty}{(q^4z^3;q^3)_\infty} \frac{\ddd z}{2\pi \iii z}\\
&\qquad= (q;q)_\infty \oint \frac{(q^2 z, q z, 1/z;q)_\infty}{(q^{4/3} z, \omega q^{4/3} z, \omega^2 q^{4/3} z; q)_\infty}\frac{\ddd z}{2\pi \iii z},
\end{align*}
where we still put $\omega=\eee^{2\pi i/3}$. We further observe that the above contour integral is indeed the integral in \eqref{eq:H-connection} with $q$ replaced by $q^{1/3}$. By \eqref{eq:H-connection} and \eqref{eq:H-calculation}, we conclude that
\begin{align*}
\sum_{m,n\ge 0}\frac{(-1)^n q^{2\binom{m}{2}+9\binom{n}{2}+3mn+2m+7n}}{(q;q)_m (q^3;q^3)_n}&=(q;q)_\infty \cdot \frac{1}{(q;q)_\infty (q^2,q^3;q^{6})_\infty}\\
&=\frac{1}{(q^2,q^3;q^{6})_\infty}.
\end{align*}

\section{Conclusion}

The entire surprise comes from the single-asymmetric Rogers--Ramanujan type identity \eqref{eq:asy-RR-new}, which is \textit{completely weird}, as pointed out by George Andrews in a personal communication. Although its $a$-generalization \eqref{eq:RR-a-generalization} presents a similar shape to the Lebesgue identity \eqref{eq:Leb} or the Corteel--Savage--Sills identity \eqref{eq:CSS}, the unexpectedness rises as two extra $q$-shifted factorials are attached to the summand in \eqref{eq:RR-a-generalization} in comparison to the original \eqref{eq:asy-RR-new}. It is also natural to ask if there are other instances that behave as ``unreasonable'' as \eqref{eq:asy-RR-new}. A thorough computer search will be carried out in a forthcoming project.

In addition, \eqref{eq:asy-RR-new} gives a $q$-analog of the following identity:
\begin{align*}
&\sum_{n\ge 0}\frac{(1/3)_n}{n!}\left({-\frac{1}{3}}\right)^n\\
&\qquad= {}_{1}F_{0}\left(\begin{matrix}
1/3\\
-
\end{matrix};-\frac{1}{3}\right)= \left(\frac{3}{4}\right)^{\frac{1}{3}} \qquad\text{(by \cite[p.~64, (2.1.6)]{AAR1999})}\\
&\qquad= \frac{\Gamma(\frac{5}{18})\Gamma(\frac{11}{18})\Gamma(\frac{17}{18})\Gamma(\frac{1}{2})}{\Gamma(\frac{2}{9})\Gamma(\frac{5}{9})\Gamma(\frac{8}{9})\Gamma(\frac{2}{3})}\notag\\
&\qquad= \prod_{n\ge 0}\frac{(n+\frac{2}{9})(n+\frac{5}{9})(n+\frac{8}{9})(n+\frac{2}{3})}{(n+\frac{5}{18})(n+\frac{11}{18})(n+\frac{17}{18})(n+\frac{1}{2})}. \qquad\text{(by \cite[p.~247]{WW2021})}
\end{align*}
It is remarkable that in this limiting case (for $q\to 1^{-}$), the ${}_{2}\phi_{2}$ (or ${}_{3}\phi_{2}$) series in \eqref{eq:asy-RR-new} reduces to a ${}_{1}F_{0}$ series of argument $-\frac{1}{3}$.

\subsection*{Acknowledgements}

This work was supported by a Killam Postdoctoral Fellowship from the Killam Trusts.

\bibliographystyle{amsplain}

\end{document}